\newtheorem{theorem}{Theorem}
\begin{document}
\title{A note on bounds for the cop number using tree decompositions}
\author[A. Bonato]{Anthony Bonato}
\address{Department of Mathematics, Ryerson University, Toronto, ON, Canada,
M5B 2K3}
\email{\texttt{abonato@ryerson.ca}}
\author[N.E. Clarke]{Nancy E. Clarke}
\address{Department of Mathematics \& Statistics, Acadia University, Wolfville, NS, Canada, B4P 2R6}
\email{\texttt{nancy.clarke@acadiau.ca}}
\author[S. Finbow]{Stephen Finbow}
\address{Department of Mathematics, Statistics, \& Computer Science, St. Francis Xavier University, Antigonish, NS, Canada,
B2G 2W5}
\email{\texttt{sfinbow@stfx.ca}}
\author[S. Fitzpatrick]{Shannon Fitzpatrick}
\address{Department of Mathematics \& Statistics, University of Prince Edward Island,  Charlottetown, PE, Canada, C1A 4P3}
\email{\texttt{sfitzpatrick@upei.ca}}
\author[M.E. Messinger]{Margaret-Ellen Messinger}
\address{Department of Mathematics \& Computer Science, Mount Allison University, Sackville, NB,
Canada, E4L 1E6}
\email{mmessinger@mta.ca}
\thanks{Supported by grants from NSERC and Ryerson}
\keywords{Cops and Robbers, cop number, treewidth, retract, tree decomposition}
\subjclass{05C57}

\begin{abstract}
In this short note, we supply a new upper bound on the cop number in terms of tree decompositions. Our
results in some cases extend a previously derived bound on the cop number using treewidth.
\end{abstract}

\maketitle

\section{Introduction}

The game of Cops and Robbers (defined at the end of this section) is usually
studied in the context of the minimum number of cops needed to have a
winning strategy, or \emph{cop number}. The cop number (written $c(G)$ for a graph $G$) is a challenging
graph parameter for a variety of reasons, and establishing upper bounds for
this parameter are the focus of Meyniel's conjecture: the cop number of a
connected $n$-vertex graph is $O(\sqrt{n}).$ For additional background on
Cops and Robbers and Meyniel's conjecture, see the recent book~\cite{bonato}.

The following elegant upper bound was given in \cite{joret}:
\begin{equation}
c(G)\leq tw(G)/2+1,  \label{first}
\end{equation}%
where $tw(G)$ is the treewidth of $G.$ The bound (\ref{first}) is
tight if the graph has small treewidth (up to treewidth $5).$ Further, it
gives a simple proof that outerplanar graphs have cop number at most $2$ (first proved in \cite{clarke8}).

For many families of graphs, however, the bound (\ref{first}) is far from
tight; for example, for a positive integer $n$, a clique $K_{n}$ has treewidth $n-1,$ but is cop-win.
Similarly, Cartesian $n\times n$ grids $P_{n}\square P_{n}$ have cop number $%
2,$ but have treewidth $n.$

In this short note, we give a new bound on the cop number that exploits tree decompositions, and in some cases improves on (\ref{first}). The idea of the proof
of (\ref{first}) is to guard bags and use isometric paths to move cops from one bag to another.
We modify this approach, and our main tool is the notion of a retract, and a retract cover of a graph. See
Theorems~\ref{main1}, \ref{i}, and \ref{main2}. Besides giving the correct bounds for various families (such as
grids, cliques, and $k$-trees), our results give a new approach to bounding the cop number by exploiting properties of tree decompositions.

\subsection{Definitions and notation}

We consider only finite, reflexive, undirected graphs in the paper. For
background on graph theory, the reader is directed to \cite{diestel,west}.

The game of \emph{Cops and Robbers} was independently introduced in~\cite{nw,q} and the cop number was introduced in~\cite{af}. The game is played on a reflexive
graph; that is,
vertices each have at least one loop. Multiple edges are allowed, but make
no difference to the game play, so we always assume there is exactly one
edge between adjacent vertices. There are two players consisting of a set of
\emph{cops} and a single \emph{robber}. The game is played over a sequence
of discrete time-steps or \emph{rounds}, with the cops going first in round $%
0$ and then playing alternate time-steps. The cops and robber occupy
vertices; for simplicity, we often identify the player with the vertex they
occupy. We refer to the set of cops as $C$ and the robber as $R.$ When a
player is ready to move in a round they must move to a neighbouring vertex.
Because of the loops, players can \emph{pass}, or remain on their own
vertex. Observe that any subset of $C$ may move in a given round. The cops
win if after some finite number of rounds, one of them can occupy the same
vertex as the robber (in a reflexive graph, this is equivalent to the cop
landing on the robber). This is called a \emph{capture}. The robber wins if
he can evade capture indefinitely.

If we place a cop at each vertex, then the cops are guaranteed to win.
Therefore, the minimum number of cops required to win in a graph $G$ is a
well-defined positive integer, named the \emph{cop number} (or \emph{%
copnumber}) of the graph $G.$ We write $c(G)$ for the cop number of a graph $%
G$. In the special case $c(G)=1,$ we say $G$ is \emph{cop-win}.

An induced subgraph $H$ of $G$ is a \emph{retract} if there is a
homomorphism $f$ from $G$ onto $H$ so that $f(x)=x$ for $x\in V(H);$ that
is, $f$ is the identity on $H.$ The map $f$ is called a \emph{retraction}.
For example, each isometric path is a retract (as shown first in \cite{af}), as is each clique. Each
retract $H$ with retraction $f$ can be \emph{guarded} by a set of cops in
the following sense: if the robber is on $x$, then the cops play in $H$ as
if the robber were on $f(x)$. If there are a sufficient number of cops to
capture the image of the robber, then, after finitely many rounds, if the
robber entered $H$ he would be immediately caught. We denote the minimum
number of cops needed to guard $H$ by $\mathrm{guard}(H)$. Note this
parameter is well-defined, as each vertex of $H$ can be guarded. In the case
of an isometric path $P$, it was shown in \cite{af} that $\mathrm{guard}(P)=1.$

\section{Tree decompositions}

In a tree decomposition, each vertex of the graph is represented by a
subtree, such  that vertices are adjacent only when the corresponding
subtrees intersect. Formally, given a graph $G = (V, E)$, a \textit{tree
decomposition} is a pair $(X, T)$, where $X = \{X_1, \ldots, X_n\}$ is a
family of subsets of $V$ called \emph{bags}, and $T$ is a tree whose vertices are the subsets $%
X_i$, satisfying the following three properties.

\begin{enumerate}
\item $V=\bigcup_{i=1}^nX_i.$ That is, each graph vertex is associated with
at least one tree vertex.

\item For every edge $(v, w)$ in the graph, there is a subset $X_i$ that
contains both $v$ and $w$. That is, vertices are adjacent in $G$ only when
the corresponding subtrees have a vertex in common.

\item If $X_i$, $X_j$ and $X_k$ are nodes, and $X_k$  is on the path from $%
X_i$ to $X_j,$ then $X_i\cap X_j \subseteq X_k$.
\end{enumerate}

Item (3) is equivalent to the fact that for each vertex $x$, the bags containing $x$
form a subtree of $T.$ The \textit{width} of a tree decomposition is the size of its largest set $%
X_i$ minus one. The \textit{treewidth} of a graph $G,$ written $tw(G),$ is
the minimum width among all possible tree decompositions of $G$. For more on treewidth,
see \cite{bod,diestel}.

Given an induced subgraph $H$ of $G,$ a \emph{cover} of $H$ in $G$, written
$\mathcal{C}_G(H)$ is a set of induced subgraphs $\{H_{i}:i\in I\}$ of $G$ whose
union contains $H$ (note that the subgraphs $H_i$ need not be disjoint). A
\emph{retract cover} of $H$ in $G$ is a cover where each $H_i$ is a retract; we write $\mathcal{C}_{R,G}(H)$ to denote a
retract cover.
See Figure~\ref{zero} for an example.
\begin{figure} [h!]
\begin{center}
\epsfig{figure=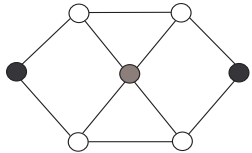,width=1.5in,height=1in}
\caption{The subgraph induced by the white and gray vertices forms a retract cover of the subgraph $H$ induced by the white vertices.}\label{zero}
\end{center}
\end{figure}

Define the \emph{retract cover cop number} of $H$ by
\begin{equation*}
\mathrm{rcc}_G(H)=\min_{\mathcal{C}_{R,G}(H)}\sum\limits_{H_{i}\in \mathcal{C}_{R,G}(H)}\mathrm{guard}%
(H_{i}),
\end{equation*}
where the minimum ranges over all retract covers $\mathcal{C}_{R,G}(H)$ of $H$ in $G.$ For example, $\mathrm{rcc}_G(H)=1$ in the
graph in Figure~\ref{zero}, while $\mathrm{rcc}_H(H)=2$. A retract cover which achieves this minimum is called a  \emph{minimal retract cover} of $H$. Note that if $\mathrm{rcc}_G(H)$-many cops are available, then after the finitely many rounds,
the cops can be positioned so that if the robber entered $H,$ he would be
immediately caught: for each retract in a retract cover of $H,$ the appropriate number of cops guard that retract.
This is an essential observation used to prove the following theorem. For a bag $B$, we use the
notation $\left\langle B\right\rangle$ for the subgraph induced by $B.$

\begin{theorem}
\label{main1}If $G$ is a graph, then%
\begin{equation*}
c(G)\leq 2\min_{T}\{\max_{B\in T}\mathrm{rcc}_G(\left\langle B\right\rangle )\},
\end{equation*}
where the minimum ranges over all tree decompositions $T$ of $G$.
\end{theorem}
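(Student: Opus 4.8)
The plan is to adapt the standard guarding-and-relocation strategy used for the treewidth bound in \cite{joret}, but replacing the single isometric path used there with a retract cover that lets us guard an entire bag. Fix a tree decomposition $T$ realizing the outer minimum, and set $k=\max_{B\in T}\mathrm{rcc}_G(\langle B\rangle)$; the goal is to show $2k$ cops suffice. The governing idea is that the cops will maintain control of a single bag $B$ by using $\mathrm{rcc}_G(\langle B\rangle)\le k$ cops, which (by the essential observation made just before the theorem) means that once these cops have settled into their guarding positions for the retracts of a minimal retract cover of $\langle B\rangle$, the robber can never step onto a vertex of $B$ without being caught. Because the bags of a tree decomposition satisfy the separator property, guarding $\langle B\rangle$ effectively splits the robber's territory: removing $B$ from $G$ disconnects $T$ into components, and any path the robber takes between vertices lying in different components must pass through some vertex of $B$, hence through the guarded set.

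Having established that $k$ cops can lock down one bag, I would run the usual descent through the tree. Think of it as a pursuit on the tree $T$: the cops guard a current bag $B$, the robber is confined (eventually) to the vertices appearing only in bags on one side of $B$, and we then want to advance the guard to a neighbouring bag $B'$ in $T$ that lies in the robber's component, thereby shrinking the robber's territory. The reason we need $2k$ rather than $k$ cops is the relocation step: while one group of $k$ cops continues to hold $B$, a second group of $k$ cops moves to establish the guard on $B'$. Since adjacent bags in a tree decomposition overlap in the separator $B\cap B'$, and the robber cannot cross the currently-guarded $B$, the second group can be routed to their target positions on $B'$ without the robber escaping through $B$ in the meantime. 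Once $B'$ is fully guarded by the second group, the first group is released and becomes the mobile group for the next step. Iterating down the (finite) tree toward the robber, the robber's available territory strictly decreases, and when it is reduced to the vertices of a single bag the guarding cops capture him.

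The step I expect to be the main obstacle is making the relocation of the second group of $k$ cops rigorous: I must argue that these cops can actually reach their designated guarding positions on the retracts covering $\langle B'\rangle$ in finitely many rounds while the robber is still held by the first group. Guarding a retract $H_i$ requires the guarding cops to first reach and settle onto their shadow-tracking positions dictated by the retraction $f_i$, and this settling takes finitely many rounds during which those cops are not yet guarding; I need to confirm that the robber, blocked by $B$ from entering the $B'$-side structure prematurely, cannot exploit this transient window. The separator property of $B$ combined with the fact that $B'$ lies on the robber's side, so that the whole of $B'$ is reachable by the mobile cops without crossing $B$, should close this gap, but it requires care to phrase in terms of which vertices the mobile cops may traverse. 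I would also need to handle the base case and the precise notion of ``robber's territory'' so that the induction on the size of that territory is well-founded, and verify that guarding $\langle B\rangle$ with $\mathrm{rcc}_G(\langle B\rangle)$ cops genuinely prevents the robber from occupying any vertex of $B$, which follows from the definition of $\mathrm{rcc}_G$ and $\mathrm{guard}$ applied to each retract in the cover.
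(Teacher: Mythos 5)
Your proposal follows essentially the same strategy as the paper's proof: two teams of $k=\mathrm{rcc}$-many cops, one holding a minimal retract cover of the current bag while the other advances to guard the next bag toward the robber in $T$ (the paper calls the combined advance a ``leap step''), with the separator property of bags ensuring the robber's territory shrinks until capture. The transient-settling concern you flag is handled in the paper by the observation that $B\cap B'$ stays guarded throughout the transition, which is exactly the resolution you sketch, so your argument is correct and matches the paper's.
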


For example, if $G$ is the $n\times n$ Cartesian grid $P_{n}\square P_{n}$,
then the following is a tree decomposition of $G$ into isometric paths.
Label the vertices as $(i,j)$, where $1\le i,j \le n.$ For $1\le i \le n-1$ and
$1\le j \le n,$ consider the path $$B_{i,j} = \{(i,k): j \le k \le n\} \cup \{(i + 1, k): 1 \le k \le j \}.$$
See Figure~\ref{one} for an illustration of one such path in the case $n=5$.
\begin{figure} [h!]
\begin{center}
\epsfig{figure=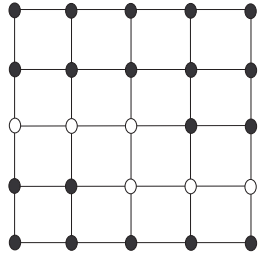,width=1.5in,height=1.5in}
\caption{The path with white vertices is the bag $B_{2,3}.$}\label{one}
\end{center}
\end{figure}
As the retract cover number of an isometric path is $1,$ this tree decomposition and Theorem~\ref{main1} gives an upper bound of $2$
for the cop number of grids (and of course, $2$ is the correct value).

\begin{proof}
Let $m=\min_{T}\{\max_{B\in T}rcc_G(\left\langle B\right\rangle )\},$ and let $%
T$ be a fixed tree decomposition of $G$ realizing the minimum. Place $2m$ arbitrarily cops in a fixed
bag $B$ of $T$ (any bag will do, or the cops can all move to a bag in the centre of the tree to
speed up capture). We call one team of $m$ cops $X$ and the other $X^{\prime }$. Match each cop $C$ from $X$ with a unique cop $C^{\prime }$ from $%
X^{\prime },$ so that the cops $C$ and $C^{\prime }$ share each other's
positions.  (In particular, at this phase of the cops' strategy, there are at least two cops at any position occupied by the cops).
After a finite number of rounds, the cops can position themselves on a minimal retract cover of $\left\langle B\right\rangle $ so
that $\left\langle B\right\rangle $ is guarded. Hence, $R$ cannot enter $B.$

Let $B^{\prime }$ be the unique bag adjacent to $B$ in $T$ which is on a shortest path connecting the
bag $B$ to a bag containing the robber. (Note that since the set
of bags containing the robber is a subtree of $T$, the robber is not necessarily in a unique bag. However, as $T$
is a tree, there is a unique bag neighbouring $B$ which has shortest distance to the subtree containing the robber.) The cops
would like to move to $B^{\prime }$ in such a way that $R$ cannot enter $B.$
The team $X$ of cops remains in the minimal retract cover of $\left\langle B\right\rangle $ and continue to guard it; the team $X^{\prime }$
moves to a  minimal retract cover of $\left\langle B^{\prime }\right\rangle$, and, after finitely many rounds guards $\left\langle B^{\prime }\right\rangle$.
Note that $B\cap B^{\prime }$ remains guarded throughout the transition. The cops in $X$ are now free to move without
concern that $R$ will enter $B.$

Now the cops in team $X$ can then move to a minimal retract cover $\mathcal{C}_{R,G}(H)$ of $\left\langle B''\right\rangle $ where $B''$ is the unique bag adjacent to $B'$ which is on a shortest path in $T$ to a bag containing the robber. Team $X$ then guards $\mathcal{C}_{R,G}(H)$. Note that we may now swap the roles of $X$ and $X'$, and we have moved the cops closer to the robber in the tree $T.$ We call the process of moving $X$ from a minimal retract cover of $B$ to a minimal retract cover of $B''$ a \emph{leap step}, as the cops in $X$ move through $B'$ and onwards toward $B''$. See Figure~\ref{leap}.
\begin{figure} [h!]
\begin{center}
\epsfig{figure=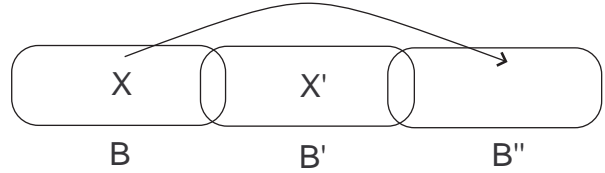}
\caption{A leap step}\label{leap}
\end{center}
\end{figure}

By the definition of tree decomposition, the bags containing $R$ form a
subtree $T^{\prime }$ of $T.$ By an iterated application of leap steps (after each such step, we swap the roles of teams $X$ and $X'$) for each bag on a shortest path connecting the cops' bag to the robbers, the cops move closer
to $T^{\prime }$, ensuring the robber will never enter bags previously guarded by the cops. By induction on the number of vertices of $T$, the cops may capture the robber.
\end{proof}

Observe that the proof of Theorem~\ref{main1} gives an algorithm for capturing the robber. Further, we can estimate
the length of the game using this algorithm. To be more precise, the
\emph{length} of a game is the number of rounds it takes (not including the
initial or $0$th round) to capture the robber. We say that a play of the game with $c(G)$
cops is \emph{optimal} if its length is the minimum over all possible strategies for the cops, assuming the robber is trying to evade capture for as
long as possible. There may be many optimal plays possible (for example, on $
P_{4},$ the cop may start on either vertex of the centre), but the length of
an optimal game is an invariant of $G.$ We denote this invariant $\mathrm{capt}(G),$
which we call the \emph{capture time} of $G.$ For a bound on the capture time in terms of the strategy in the proof of Theorem~\ref{main1}, note that the cops move to a bag in the centre of $T$, guard that bag, then move
towards the robber's bag.  Given a tree decomposition $T$, let the number of rounds it takes to guard a minimal retract cover of any bag in $T$ be at most $g_T$. Let $tr_T$ be the number of rounds it takes to move from a minimal retract cover of a bag $B$ to a minimal retract cover of a bag which is at most distance two from $B$ in $T$ (for instance, as in a leap step). Since the capture time of $T$ is $\left\lceil\mathrm{diam}(T)/2\right\rceil$ and the worst case is that the cops will need
to guard each bag and transition along each edge of a path with that length, a bound on the capture time of $G$ is then
\begin{equation}\label{two}
\mathrm{capt}(G) \le \min_T\{g_T(\left\lceil\mathrm{diam}(T)/2\right\rceil +1)+tr_T(\left\lceil\mathrm{diam}(T)/2\right\rceil)\},
\end{equation}
where the minimum ranges over all tree decompositions $T$ of $G.$

The bound (\ref{two}) may be far from the tight, as it depends on the values of the functions $g_T$ and $tr_T$. We can make a minor improvement on (\ref{two}) in the case $\mathrm{diam} (T)$ is odd (which implies that the centre of $T$ consists of two vertices). Start each of the two teams of cops on a minimal retract cover of different bags associated with the two bags of the centre of $T$. After each of these bags is guarded, the guards may proceed with leap steps.  Using this algorithm, a bound on the number of steps to capture the robber is found replacing the ceiling functions in (\ref{two}) with floor functions. Nevertheless, (\ref{two}) represents the first estimate on the capture time we are aware of applicable to diverse families of graphs such as outerplanar graphs.

We note that if each bag is a clique, then the idea of the proof shows a
strengthened bound.

\begin{theorem}\label{i}
If $G$ has a tree decomposition with each bag a clique, then the graph $G$
is cop-win.
\end{theorem}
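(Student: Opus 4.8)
The plan is to run the strategy from the proof of Theorem~\ref{main1}, but to show that a \emph{single} cop suffices. First I would record the two facts that drive everything: a clique $K$ is a retract with $\mathrm{guard}(K)=1$ (one cop, tracking the shadow $f_K(R)$ of the robber, stays on a vertex of $K$ and catches $R$ the instant $R$ enters $K$), and consequently $\mathrm{rcc}_G(\langle B\rangle)=1$ for every bag $B$, using the trivial cover $\{\langle B\rangle\}$. In particular Theorem~\ref{main1} already yields $c(G)\le 2$, so the entire content of the statement is to delete the factor of $2$. That factor arose only from needing a second team of cops to hold the overlap $B\cap B'$ during a leap step; the point of the proof will be that, once the bags are cliques, this holding comes for free because consecutive separators live inside a common clique bag.

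Next I would set up the confinement argument. For adjacent bags $B,B'$ the separator $S=B\cap B'$ is itself a clique, and by property~(3) of the tree decomposition $S$ separates the vertices in the two components of $T-BB'$; hence one cop guarding the clique $S$ traps $R$ on one side. To initialize, I would let the cop walk freely (ignoring $R$) to a vertex of a central separator and begin guarding it, so that $R$ is confined to one side of $T$, or is caught. The cop then iterates the following local move. Suppose it occupies and guards a clique $Q\subseteq B'$, where $B'$ is the bag it currently sits in, and $R$ is trapped on the $B'$-side. If $R\in\langle B'\rangle$, then, since $B'$ is complete and the cop already stands on a vertex of $B'$, it re-synchronizes within $B'$ onto the shadow $f_{B'}(R)$ and captures $R$. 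Otherwise $R$ lies strictly beyond the unique neighbour $B''$ of $B'$ on the shortest $T$-path to $R$; now both $S$ and the next separator $S'=B'\cap B''$ lie in the single clique $B'$, so the cop slides within $B'$ onto a vertex of $S'$ and takes over guarding $S'$. This advances the cop's bag from $B'$ to $B''$ and replaces $R$'s subtree by a \emph{proper} subtree of itself. Since $T$ is finite, after finitely many such moves $R$ is confined to a single (leaf) bag and is then captured, exactly the induction on $|V(T)|$ already used in Theorem~\ref{main1}.

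The step I expect to be the main obstacle is verifying that one cop really can carry the guard across $B'$ without ever opening a gap, since there is no second team to hold the old separator during the transition. Here the clique hypothesis is precisely what rescues us: because the old and new guarded cliques both sit inside the complete graph $B'$, re-targeting the shadow from $S$ to $S'$ costs a single step, and to slip back the robber would first have to step onto $S'$, where the now re-synchronized guard catches it. Making this airtight amounts to the usual one-step shadow-lag bookkeeping for guarding a clique, namely checking that $R$ (which lies beyond $S'$) cannot reach $S'$ during the single transition round; once that check is in hand the argument closes. As an aside, I would note that a tree decomposition into cliques forces $G$ to be chordal, so Theorem~\ref{i} can alternatively be read as the classical fact that chordal graphs are cop-win, but the tree-decomposition argument above is the one that fits the framework developed here.
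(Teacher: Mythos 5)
Your proposal is correct and takes essentially the same approach as the paper's (very terse) proof: a single cop guards a clique inside the current bag and advances along the tree decomposition, using the fact that the old and new separators $B\cap B'$ and $B'\cap B''$ both lie in the common clique bag $B'$, so the hand-off never opens a gap. You have simply supplied the shadow-tracking and territory-shrinking details that the paper's two-sentence sketch leaves implicit.
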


\begin{proof}
The proof is analogous as before, but one cop is needed to guard a given
bag. That cop can move to $B\cap B^{\prime }$ without concern that $R$ will
enter $B.$
\end{proof}

Theorem~\ref{i} gives an alternative proof that chordal graphs are cop-win, as
chordal graphs are precisely those graphs with tree decompositions where
each bag induces a clique; see~\cite{bod}. Note also that $k$-trees are chordal, and have
treewidth $k;$ in particular, the bound (\ref{first}) is linear in $k,$
while Theorem~\ref{i} requires only one cop.

We finish with a bound in the case where there are conditions on the intersection of bags.

\begin{theorem}
\label{main2}If $G$ is a graph with a tree decomposition $T$ with the property that any two bags intersect in a clique, then%
\begin{equation*}
c(G)\leq \max_{B\in T}\mathrm{rcc}_G(\left\langle B\right\rangle )+1.
\end{equation*}
\end{theorem}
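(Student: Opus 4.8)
The plan is to adapt the strategy from the proof of Theorem~\ref{main1}, but to exploit the clique-intersection hypothesis to cut the cost of moving between adjacent bags. Set $m = \max_{B \in T}\mathrm{rcc}_G(\langle B\rangle)$ and deploy $m+1$ cops: a team $X$ of $m$ cops whose job is to guard a minimal retract cover of whichever bag it currently occupies, together with a single extra cop $g$ whose job is to guard separators. Exactly as in Theorem~\ref{main1}, after finitely many rounds $X$ can position itself on a minimal retract cover of some initial bag $B$ (say a central bag of $T$), so that $\langle B\rangle$ is guarded and the robber cannot enter $B$.

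For the advancing step, I would use the standard separator property of tree decompositions: for adjacent bags $B, B'$, if $T_B$ and $T_{B'}$ are the two components of $T$ obtained by deleting the edge $BB'$, and $V_B, V_{B'}$ denote the unions of their respective bags, then $V_B \cap V_{B'} = S$, where $S = B\cap B'$, and $S$ separates $V_B \setminus S$ from $V_{B'} \setminus S$ in $G$. By hypothesis $S$ is a clique, and a clique can be guarded by a single cop (as already used in Theorem~\ref{i}), so $g$ can guard $S$. The key observation is that once $g$ guards $S$, the robber can never occupy a vertex of $S$ without being captured, and hence---$S$ being a separator---the robber cannot cross from the $B'$-side to the $B$-side. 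Therefore, while $g$ holds $S$, the team $X$ may safely relocate from a minimal retract cover of $\langle B\rangle$ to one of $\langle B'\rangle$, since no movement of $X$ can let the robber back into the region being vacated. Once $X$ guards $\langle B'\rangle$, we have $S \subseteq B'$ guarded by $X$, so $g$ is released and free to guard the next separator.

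I would then iterate this step along the shortest path in $T$ from the cops' bag toward the subtree $T'$ of bags containing the robber, always taking $B'$ to be the neighbour of the current bag on that path. Each completed step confines the robber to a strictly smaller region: after $X$ guards $\langle B'\rangle$, the robber is trapped in a single component of $T - B'$, which contains strictly fewer bags than the component of $T - B$ in which it was previously confined. Since $T$ is finite, this region cannot shrink forever, and when it is exhausted the robber is forced into a bag guarded by $X$ and is captured. Counting cops, this uses $m$ cops for $X$ and one for $g$, giving $c(G) \le \max_{B \in T}\mathrm{rcc}_G(\langle B\rangle) + 1$.

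The main obstacle, and the place where the clique hypothesis does all the work, is justifying that a single cop suffices to hold the separator during the transition of $X$: in Theorem~\ref{main1} a full second team of $m$ cops was required precisely because a general intersection $B \cap B'$ need not be cheap to guard, whereas here it is a clique and so costs only one cop. I would take care over the timing of the handoffs---checking that $S$ is \emph{continuously} guarded (first by $X$ through $\langle B\rangle$, then by $g$, then again by $X$ through $\langle B'\rangle$), so that at no round does a gap open through which the robber could slip back---and over the claim that the confinement region is well-defined and monotonically shrinking, which supplies the induction that terminates the game.
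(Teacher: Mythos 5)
Your proposal is correct and follows essentially the same strategy as the paper's proof: a team of $m=\max_{B\in T}\mathrm{rcc}_G(\langle B\rangle)$ cops guards a minimal retract cover of the current bag while one extra cop holds the clique separator $B\cap B'$ (guardable by a single cop since cliques are retracts with cop number one), allowing the main team to relocate to the next bag toward the robber and iterate by induction. Your write-up is in fact more explicit than the paper's about the separator property and the continuity of the handoffs, but the underlying argument is the same.
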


\begin{proof}
The proof is analogous to the proof of Theorem~\ref{main1}, except the team $%
X^{\prime }$ consists of one additional cop $C^{\prime }$. Using the
notation of the proof of Theorem~\ref{main1},  $m$-many cops guard $\left\langle
B\right\rangle ,$ while the cop $C^{\prime }$ moves to $B\cap B^{\prime }$.
Hence, $\left\langle B\right\rangle $ is guarded. The cops $X$ can move to a minimal retract cover of
$\left\langle B^{\prime }\right\rangle$ and $C^{\prime }$ ensures that $R$ never enters $
B $ (note that $B\cap B^{\prime }$ is a cut-set of $G).$ The cops may now iterate this procedure and, by induction
on the number of vertices of the tree decomposition, eventually capture the robber.
\end{proof}

\section{Acknowledgements}

The authors would like to thank BIRS where part of the research for this
paper was conducted.

\end{document}